\newtheorem{thrm}{Theorem}[section]
\newtheorem{lem}[thrm]{Lemma}
\theoremstyle{definition}
\numberwithin{equation}{section}
\newtheorem{problem}[thrm]{Problem}
\newcommand{\labeq}[1]{\label{eq:#1}}
\newcommand{\refeq}[1]{(\ref{eq:#1})}
\newcommand{\labt}[1]{\label{thm:#1}}
\newcommand{\reft}[1]{Theorem~\ref{thm:#1}}
\newcommand{\labl}[1]{\label{lemma:#1}}
\newcommand{\refl}[1]{Lemma~\ref{lemma:#1}}
\newcommand{\lmeas}[1]{\lambda\left( #1 \right)}
\newcommand{\wrt}[1]{\hbox{ w.r.t. }#1}
\newcommand{\floor}[1]{\left\lfloor #1 \right\rfloor}
\newcommand{\NQ}{\mathscr{N}(Q)}
\newcommand{\N}[1]{\mathscr{N}( #1 )}
\newcommand{\DNQ}{\mathscr{DN}(Q)}
\newcommand{\DN}[1]{\mathscr{DN}( #1 )}
\newcommand{\IB}{\mathcal{I}_{Q,j}(B)}
\newcommand{\IBn}[1]{\mathcal{I}_{Q,#1}(B)}
\newcommand{\blank}[1]{ }
\author[D. Airey]{Dylan Airey}
\address[D. Airey]{
Department of Mathematics, University of Texas at Austin, 2515 Speedway, Austin, TX 78712-1202, USA}
\email{dylan.airey@utexas.edu}
\author[B. Mance]{Bill Mance}
\address[B. Mance]{Department of Mathematics, University of North Texas, General Academics Building 435, 1155 Union Circle,  \#311430, Denton, TX 76203-5017, USA}
\email{mance@unt.edu}
\begin{document}

\thanks{Research of the authors is partially supported by the U.S. NSF grant DMS-0943870.  The authors  thank Joseph Vandehey for helpful discussions.}

\title[Normal equivalencies for eventually periodic basic sequences]{Normal equivalencies for eventually periodic basic sequences}

\begin{abstract}
W. M. Schmidt, A. D. Pollington, and F. Schweiger have studied when normality with respect to one expansion is equivalent to normality with respect to another expansion. Following in their footsteps, we show that when $Q$ is an eventually periodic basic sequence, that $Q$-normality and $Q$-distribution normality are equivalent to normality in base $b$ where $b$ is dependent on $Q$. We also show that boundedness of the basic sequence is not sufficient for this equivalence.
\end{abstract}

\maketitle

\section{Introduction}


Let $\N{b}$ be the set of {\it normal numbers in base $b$}. We say that two natural numbers $r$ and $s$ are {\it equivalent}, or $r\sim s$, if $\frac{\log r}{\log s}$ is rational. W. M. Schmidt showed the following in \cite{SchmidtRelated}.
\begin{thrm}[W. M. Schmidt, 1960]\labt{Schmidt}
We have $\N{r} = \N{s}$ if and only if $r\sim s$. If $r\not \sim s$, then $\N{r} \setminus \N{s}$ is uncountable.
\end{thrm}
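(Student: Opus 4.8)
The plan is to prove Schmidt's theorem in two halves. First I would establish the easy direction: if $r \sim s$, then $\N{r} = \N{s}$. Writing $\frac{\log r}{\log s} = \frac{p}{q}$ in lowest terms, we get $r^q = s^p$, so $r$ and $s$ are both integer powers of a common base $t$ (namely $t = r^{1/q} = s^{1/p}$, which is an integer by unique factorization). Thus it suffices to show $\N{t} = \N{t^k}$ for every integer $k \ge 1$. This is a standard fact: a number is normal in base $t$ if and only if it is normal in base $t^k$, because blocks of length $n$ in base $t^k$ correspond to blocks of length $kn$ in base $t$, and one can pass between the two frequency statements by grouping digits and using that the $t$-ary expansion reading along arithmetic progressions of positions with common difference $k$ still equidistributes. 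I would cite or sketch this block-recoding argument rather than grind through the $\varepsilon$-management.

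The substantive direction is: if $r \not\sim s$, then $\N{r} \setminus \N{s}$ is uncountable (which in particular gives $\N{r} \ne \N{s}$, completing the iff). The strategy is to construct, for a continuum of choices, a real number $x$ that is normal in base $r$ but not normal in base $s$. The engine here is that when $r \not\sim s$, the sequence $(r^n s^{-m})$ — equivalently $(n \log r - m \log s)$ modulo appropriate scaling — behaves "independently," so one can build the $r$-ary expansion of $x$ to look random (forcing $r$-normality) while secretly arranging long runs in the $s$-ary expansion that sabotage $s$-normality. Concretely, I would partition $\mathbb{N}$ into consecutive blocks $B_1, B_2, \dots$ of rapidly growing lengths; on "generic" blocks fill the $r$-ary digits of $x$ with a fixed normal sequence (e.g. a Champernowne-type string in base $r$) so that the overall $r$-ary digit sequence is equidistributed for all block-lengths; on a sparse subsequence of blocks, instead control the $s$-ary expansion directly by choosing $x$ on a short subinterval to have $s$-ary digits all equal to $0$ for a very long stretch, long enough that the digit-frequency count in base $s$ fails to converge. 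The uncountability comes from a binary choice at infinitely many stages (e.g. which of two normal base-$r$ strings to insert, or the exact placement/length of the sabotaging runs), yielding $2^{\aleph_0}$ distinct reals all lying in $\N{r} \setminus \N{s}$.

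The key steps, in order: (1) reduce the $r \sim s$ case to $\N{t} = \N{t^k}$ and dispatch it by digit-block recoding; (2) set up a nested-interval / digit-prescription construction where at stage $n$ we have fixed a finite prefix of the $r$-ary expansion of $x$ and we know the induced $s$-ary digits up to some point; (3) on generic stages, append a block of $r$-ary digits copied from a fixed base-$r$ normal sequence, and verify that doing this along the whole construction keeps $x$ normal in base $r$ — this needs a lemma that inserting sufficiently sparse "bad" blocks does not destroy normality, which is where the rapid growth of block lengths is used; (4) on sabotage stages, use the non-equivalence $r \not\sim s$ to argue that within the $r$-ary interval we've pinned down, there is a sub-subinterval on which the $s$-ary expansion begins with an arbitrarily long run of a fixed digit (this is essentially density of $\{r^n s^{-m} : n,m\}$-type ratios, i.e. $\log r / \log s \notin \mathbb{Q}$ gives the three-distance / equidistribution input); (5) encode a binary sequence into the choices to get uncountably many $x$.

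The main obstacle I expect is step (4) combined with the bookkeeping in (3): one must simultaneously keep the $r$-ary expansion equidistributed (a statement about all block lengths $\ell$, with the worst-case convergence rate worsening as $\ell$ grows) while repeatedly inserting $s$-ary-controlled segments whose $r$-ary content is essentially arbitrary. The tension is that an interval pinned down by finitely many $r$-ary digits has $s$-ary expansions that are spread out, so forcing a long $s$-ary run of $0$'s costs a very precise (hence long) stretch of $r$-ary digits, and one has to check those forced $r$-ary stretches are rare enough — in density — not to disturb $r$-normality. Making the quantitative trade-off between "how long an $s$-ary run do I need to spoil $s$-normality at stage $n$" and "how much sparsity in the $r$-ary sabotage blocks do I need to preserve $r$-normality" work out uniformly is the delicate part; the non-equivalence hypothesis enters precisely to guarantee the required $s$-ary runs exist inside every $r$-ary cylinder, and I would isolate that as a standalone lemma about the orbit of $\log r / \log s$ before assembling the construction.
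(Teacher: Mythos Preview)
The paper does not prove this theorem. It is stated as a known result of W.~M.~Schmidt with a citation to \cite{SchmidtRelated}, and is then used as an input (for instance, in the proof of \reft{main} to reduce from base $g$ to base $b$ when $g\sim b$). There is therefore no proof in the paper to compare your proposal against.

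For what it is worth, your outline is broadly in the spirit of Schmidt's original argument: the $r\sim s$ direction does reduce to $\N{t}=\N{t^k}$ via a common base, and the $r\not\sim s$ direction is indeed a construction that interleaves normality-preserving blocks in base $r$ with sabotage blocks controlling the base-$s$ expansion, with the irrationality of $\log r/\log s$ supplying the needed flexibility. But since the present paper treats Schmidt's theorem as a black box, none of that is relevant to reviewing the paper's own proof.
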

A. D. Pollington strengthened W. M. Schmidt's result in \cite{PollingtonHDNormal}.
\begin{thrm}[A. D. Pollington, 1981]
Given any partition of the numbers $2, 3, \cdots$ into two disjoint classes $R$ and $S$ such that equivalent numbers fall in the same class, the set $\mathcal{N}$ of numbers which are normal to every base from $R$ and to no base from $S$ has Hausdorff dimension 1.
\end{thrm}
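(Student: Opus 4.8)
The plan is to prove $\dim_{\mathrm H}\mathcal{N}=1$ by establishing the lower bound $\dim_{\mathrm H}\mathcal{N}\ge 1$ (the reverse is automatic, as $\mathcal{N}$ may be regarded as a subset of $[0,1]$); for this it suffices to produce, for each $\varepsilon>0$, a compact subset $K_\varepsilon\subseteq\mathcal{N}$ with $\dim_{\mathrm H}K_\varepsilon\ge 1-\varepsilon$, and then let $\varepsilon\to 0$.

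The first step is a reduction. By \reft{Schmidt}, whether a number is normal to a given base depends only on the $\sim$-equivalence class of that base; this is exactly why the hypothesis that equivalent numbers fall in the same class is needed (otherwise $\mathcal{N}$ would be empty), and it lets us pick one representative from each class and replace $R,S$ by sets $R'=\{r_1,r_2,\dots\}$ and $S'=\{s_1,s_2,\dots\}$ — each finite or countably infinite — such that all the integers occurring in $R'\cup S'$ are pairwise multiplicatively independent and $\mathcal{N}$ is unchanged. If $S'=\emptyset$ the statement is immediate, since then $\mathcal{N}$ is the set of absolutely normal numbers, which has full Lebesgue measure and hence Hausdorff dimension $1$. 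So assume $S'\ne\emptyset$; the case $R'=\emptyset$ will be covered automatically by the construction below.

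Now fix $\varepsilon>0$ and set $\tau_j=\varepsilon 2^{-j-1}$. Choose pairwise disjoint sets $A_j\subseteq\mathbb{N}$ of natural density $\tau_j$, and let $K_\varepsilon$ be the compact set of all $x\in[0,1]$ whose base-$s_j$ digit at every position in $A_j$ equals $0$, for every $j$ with $s_j\in S'$; equip $K_\varepsilon$ with the natural measure $\mu_\varepsilon$ obtained by taking the unconstrained base-$s_j$ digits independent and uniform. Three things must be checked. First, every $x\in K_\varepsilon$ has, in base $s_j$, the digit $0$ occurring with frequency $\tau_j+(1-\tau_j)s_j^{-1}\ne s_j^{-1}$ (using that the unconstrained base-$s_j$ digits are $\mu_\varepsilon$-almost surely equidistributed), so $x$ is normal to no base in $S$. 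Second, since the constrained positions have total density $\sum_j\tau_j<\varepsilon$, the measure $\mu_\varepsilon$ has lower local dimension at least $1-\varepsilon$ at $\mu_\varepsilon$-almost every point, so by the mass distribution principle $\dim_{\mathrm H}K_\varepsilon\ge 1-\varepsilon$. Third, and crucially, $\mu_\varepsilon$-almost every $x\in K_\varepsilon$ is normal to every $r_i\in R'$; discarding the exceptional null set affects neither the normality conclusions nor the dimension.

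The third point is the technical core and the step I expect to be the main obstacle: one must show that imposing a sparse family of base-$s_j$ digit conditions, for $s_j$ multiplicatively independent of $r_i$, does not destroy normality in base $r_i$. The mechanism is a quantitative independence lemma for multiplicatively independent bases: the base-$r_i$ digits of $x$ in a block of positions are determined, up to a carry term that can be controlled, by the base-$s_j$ digits of $x$ in a block rescaled by the irrational ratio $\log s_j/\log r_i$; since the conditioned positions in $A_j$ become, after this rescaling, equidistributed in phase modulo $1$ relative to the base-$r_i$ digit blocks, the base-$r_i$ digit statistics produced on the conditioned part are on average the uniform ones, so the base-$r_i$ word frequencies of $\mu_\varepsilon$-almost every $x$ are undisturbed and remain equal to the uniform values coming from the free digits. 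The same kind of estimate makes the constraints for distinct $j$ mutually transverse, so the dimension deficits only add up to $\sum_j\tau_j<\varepsilon$. Carrying out these carry and equidistribution estimates uniformly over the (possibly infinitely many) pairs $(r_i,s_j)$, while keeping the total dimension loss below $\varepsilon$, is where the real work lies; it is a dimension-quantified strengthening of W. M. Schmidt's construction in \cite{SchmidtRelated}. Having built $K_\varepsilon$ for every $\varepsilon>0$, letting $\varepsilon\to 0$ completes the proof.
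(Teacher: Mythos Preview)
The paper does not contain a proof of this theorem; it is quoted as a result of Pollington \cite{PollingtonHDNormal} and used only as background. There is therefore nothing in the paper to compare your proposal against.

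That said, your proposal has a genuine gap at the construction stage, before the ``technical core'' you flag. When $|S'|\ge 2$ the measure $\mu_\varepsilon$ is not well defined. You write that $\mu_\varepsilon$ is obtained by ``taking the unconstrained base-$s_j$ digits independent and uniform,'' but the base-$s_1$ digits of $x$ already determine $x$, and hence determine all of its base-$s_2$ digits; one cannot prescribe digits in two multiplicatively independent bases as if they were independent random variables. For the same reason the set $K_\varepsilon$ itself---an intersection of digit conditions imposed simultaneously in the distinct bases $s_1,s_2,\ldots$---has no evident Cantor structure, no obvious product measure, and no a~priori lower dimension bound; the mass distribution argument you sketch (``dimension deficits only add up to $\sum_j\tau_j$'') tacitly assumes a product structure that is not there. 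A smaller point: your first check claims the stated $0$-digit frequency for \emph{every} $x\in K_\varepsilon$, but the parenthetical justification is only $\mu_\varepsilon$-almost sure; you need to pass to a full-measure subset here as well.

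Pollington's actual argument avoids this by building a single nested Cantor-type set scale by scale: at each stage one refines the surviving intervals so as to (i) force the appropriate block frequencies in the next base $r_i$ to approach the uniform values, and (ii) delete a sparse family of subintervals to spoil normality in the next base $s_j$, controlling the proportion deleted so that the dimension loss is summable. The bases in $R'$ and $S'$ are handled sequentially at different scales rather than by simultaneous digit constraints, which is exactly what makes the measure and the dimension estimate tractable. Your outline would need to be reorganised along these lines before the equidistribution/carry estimates you mention can even be posed.
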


A {\it number-theoretic transformation} $T$ is defined by describing its action on at most countably many subsets $I(k) \subset [0,1]$ with $\bigcup I(k) = [0,1]$ and $\lambda(I(k) \cap I(j)) = 0$ if $k \neq j$. A condition is also placed on the Jacobian of $T$ to
 ensure that $T$ is ergodic and that there is a unique $T$-invariant probabilty measure $\mu$ that is absolutely continuous with
 respect to $\lambda$ (this condition can be found in \cite{SchweigerNormalEquiv}). {\it Cylinder sets} $I(k_1, \cdots, k_n)$ are
 defined recursively by $I(k_1, \cdots, k_n) = T(k_1)^{-1} I(k_2, \cdots, k_n)$. We say that $x \in [0,1]$ is {\it $T$-normal} if for
 every cylinder set $E = I(k_1, \cdots, k_n)$ we have 
$$
\lim_{N \to \infty} \frac{A_N(E, (T^n x))}{N} = \mu(E)
$$
where $A_N(E, (x_n)) = \# \{n\leq N : x_n \in E \}$ for a sequence of real numbers $(x_n)$ and an interval $E$. This is equivalent to the condition that the sequence $(T^n x)$ is $\mu$-uniformly distributed mod 1.
For a number-theoretic transformation $T$, let $\N{T}$ be the set of normal numbers with respect to $T$. Generalizations of \reft{Schmidt} have been considered for $T$-normal numbers. 
 J. Vandehey \cite{VandeheyNormalEquiv} presented a corrected proof  of a result announced by F. Schweiger in \cite{SchweigerNormalEquiv}.
\begin{thrm}[F. Schweiger, 1969 and J. Vandehey, 2014]\labt{Vandehey}
 For any ergodic number-theoretic transformation $\N{T} = \N{T^n}$ for all $n \geq 1$.
\end{thrm}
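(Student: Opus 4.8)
The plan is to prove the set equality $\N{T}=\N{T^{n}}$ by establishing the two inclusions separately, working throughout with normality reformulated as a statement about digit frequencies. Write $d_{1}(x),d_{2}(x),\dots$ for the itinerary of $x$ under $T$, so that $T^{j}x\in I(d_{j+1}(x),\dots,d_{j+r}(x))$ for every $r\ge 1$; then $x$ is $T$-normal precisely when every finite word $w=(w_{1},\dots,w_{r})$ occurs in $d_{1}(x),d_{2}(x),\dots$ with asymptotic frequency $\mu(I(w))$. Since a rank-$m$ cylinder of $T^{n}$ is a rank-$nm$ cylinder of $T$, and $(T^{n})^{k}x$ reads the digits of $x$ beginning at position $nk+1$, the point $x$ is $T^{n}$-normal precisely when every word $w$ of length divisible by $n$ occurs with frequency $\mu(I(w))$ among the occurrences that start at positions $\equiv 1\bmod n$. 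Note that $\mu$ is automatically $T^{n}$-invariant and is the reference measure defining $\N{T^{n}}$; a point I will lean on is that the Jacobian condition in \cite{SchweigerNormalEquiv} is strong enough to make $T$ — hence $T^{n}$ — mixing (in fact exact), not merely ergodic.

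The inclusion $\N{T^{n}}\subseteq\N{T}$ is a short padding argument that uses only the $T$-invariance of $\mu$. Let $x\in\N{T^{n}}$ and let $E=I(k_{1},\dots,k_{r})$ be a cylinder of $T$. For $0\le i\le n-1$ the set $T^{-i}E=\bigsqcup_{a_{1},\dots,a_{i}}I(a_{1},\dots,a_{i},k_{1},\dots,k_{r})$ is a disjoint union of cylinders of $T$ of rank $r+i$, and, extending each summand in all admissible ways up to the next multiple of $n$, a disjoint union $\bigsqcup_{\alpha}F_{\alpha}$ of cylinders of $T^{n}$. Splitting the orbit of $x$ by the residue of the time modulo $n$ gives
\[
\frac{A_{M}\bigl(E,(T^{j}x)\bigr)}{M}=\frac{1}{M}\sum_{i=0}^{n-1}\ \sum_{\alpha}\ \#\Bigl\{k<\tfrac{M-i}{n}:\ (T^{n})^{k}x\in F_{\alpha}\Bigr\},
\]
and applying the $T^{n}$-normality of $x$ to each $F_{\alpha}$, then using $\sum_{\alpha}\mu(F_{\alpha})=\mu(T^{-i}E)=\mu(E)$, the right-hand side tends to $\frac{1}{n}\sum_{i=0}^{n-1}\mu(T^{-i}E)=\mu(E)$. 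Hence $x\in\N{T}$. (When $T$ has infinitely many digits one truncates, using $\mu(T^{-i}E)<\infty$ to make the tails negligible.)

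The reverse inclusion $\N{T}\subseteq\N{T^{n}}$ is the substance of the theorem: it asserts that a $T$-normal point stays equidistributed along the arithmetic progression of times $0,n,2n,\dots$, the number-theoretic analogue of the classical fact that a base-$b$ normal number remains normal along every arithmetic progression of digit positions. The plan is a Fourier/autocorrelation argument. Fix a word $u$ of length $nm$, put $\rho=\mu(I(u))$ and $c_{p}=\mathbf 1_{I(u)}(T^{p}x)$, so that $(T^{n})^{k}x\in I(u)$ iff $c_{nk+1}=1$ and the goal is $\frac{1}{N}\sum_{k<N}c_{nk+1}\to\rho$. For each fixed lag $s$ the function $\mathbf 1_{I(u)}\cdot(\mathbf 1_{I(u)}\circ T^{s})$ is the indicator of a union of cylinders, so $T$-normality of $x$ yields
\[
\lim_{N\to\infty}\frac{1}{N}\sum_{p<N}c_{p}c_{p+s}=\mu\bigl(I(u)\cap T^{-s}I(u)\bigr),
\]
and mixing of $T$ sends the right-hand side to $\rho^{2}$ as $s\to\infty$. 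Thus the empirical autocorrelations of $(c_{p})$ are bounded and decay, whence a standard averaging argument — the same mechanism by which correlation decay rules out nontrivial eigenvalues — gives $\frac{1}{N}\sum_{p<N}c_{p}\zeta^{p}\to 0$ for every root of unity $\zeta\ne 1$. Combining this with the case $\zeta=1$, where $\frac{1}{N}\sum_{p<N}c_{p}\to\rho$, through the identity $\mathbf 1[\,p\equiv 1\bmod n\,]=\frac{1}{n}\sum_{\zeta^{n}=1}\zeta^{p-1}$, gives $\frac{1}{N}\#\{p<N:\ p\equiv 1\bmod n,\ c_{p}=1\}\to\rho/n$, i.e. $\frac{1}{N}\sum_{k<N}c_{nk+1}\to\rho$ after rescaling. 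Running this over every word $u$ of length divisible by $n$ shows $x\in\N{T^{n}}$.

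The main obstacle is exactly the correlation-decay input in the reverse inclusion. Bare ergodicity of $T$ only forces $\mu(I(u)\cap T^{-s}I(u))\to\rho^{2}$ in a Cesàro sense, and one needs it at least in a Cesàro sense along each residue class mod $n$ for the Fourier step to close: if $T$ had a nontrivial root of unity as an eigenvalue, the averaged exponential sum $\frac{1}{N}\sum_{p<N}c_{p}\zeta^{p}$ would not vanish and $T^{n}$-normality would genuinely differ from $T$-normality. So the crux is to show that the standing Jacobian condition upgrades ergodicity of $T$ to the total ergodicity and cylinder-correlation decay actually used — this is the step that exploits $T$ being a number-theoretic transformation rather than an abstract ergodic map. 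Once that regularity is secured, I would expect the remaining work — making the "bounded, decaying autocorrelations $\Rightarrow$ vanishing averaged exponential sum" step uniform enough to apply to the countably many cylinders $u$ simultaneously, together with the countable-digit bookkeeping in the easy inclusion — to be routine.
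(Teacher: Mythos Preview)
The paper does not contain a proof of this theorem. It is quoted in the introduction as a known result, announced by Schweiger \cite{SchweigerNormalEquiv} and given a corrected proof by Vandehey \cite{VandeheyNormalEquiv}; the paper's own work is the proof of \reft{main} and \reft{secondthrm} on Cantor series expansions. So there is nothing in this paper to compare your proposal against --- for that you would have to go to \cite{VandeheyNormalEquiv}.

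On the substance of your sketch: the easy inclusion $\N{T^{n}}\subseteq\N{T}$ is handled correctly. For the hard inclusion you have identified the real issue --- bare ergodicity does not rule out a nontrivial $n$th-root-of-unity eigenvalue, and without that the along-progressions equidistribution can genuinely fail; Schweiger's original argument was defective for essentially this reason, and the fix has to exploit the specific structure of number-theoretic fibred systems. Your proposed route via empirical autocorrelations and the discrete Fourier identity is coherent in outline, but the step ``bounded, decaying autocorrelations $\Rightarrow$ $\frac{1}{N}\sum_{p<N}c_{p}\zeta^{p}\to 0$'' is not automatic: decay of $\mu(I(u)\cap T^{-s}I(u))-\rho^{2}$ as $s\to\infty$ does not by itself control the averaged exponential sum, and you would need a quantitative or second-moment version of that decay (e.g.\ a van der Corput--type inequality together with summability or Ces\`aro control of the correlations) to close the argument. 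It is worth noting that the present paper, in borrowing ideas from Vandehey for its proof of \reft{main}, goes through the Pyatetskii--Shapiro hot-spot lemma (\reft{hotspot}) rather than through exponential sums: one only needs an upper density bound $\limsup A_{N}/N\le C\mu(E)$ on cylinders, which is typically easier to extract from $T$-normality than the exact two-sided limit your Fourier scheme aims for.
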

J. Vandehey also showed the equivalence of normality with respect to the regular continued fraction expansion and with respect to the odd continued fraction expansion.
C. Kraaikamp and H. Nakada \cite{KraaikampNakada} answered a conjecture of F. Schweiger \cite{SchweigerNormalEquiv} on normal numbers.
\begin{thrm}[C. Kraaikamp and H. Nakada, 2001]
There exist number-theoretic transformations $S$ and $T$ such that $\N{S} = \N{T}$ and such that there are no positive integers $m$ and $n$ with $S^m = T^n$.
\end{thrm}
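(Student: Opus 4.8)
The plan is to construct the pair $(S,T)$ from the theory of continued fractions. Take $T$ to be the Gauss map $T(x)=\br{1/x}$, the number-theoretic transformation attached to the regular continued fraction, whose unique absolutely continuous invariant measure $\mu_T$ has density $\frac{1}{(1+x)\log 2}$ on $[0,1]$. For $S$ there are two natural choices, both number-theoretic transformations on $[0,1]$ (up to an affine identification of the interval): the odd continued fraction map, or Nakada's $\al$-continued fraction map for a fixed $\al\in(\tfrac12,1)$ --- say $\al=\tfrac12$, the nearest-integer algorithm. In either case the invariant measure $\mu_S$ has a genuinely different density, piecewise of the form $x\mapsto\frac{1}{x+c}$ with more than one piece; and, for the $\al$-continued fraction, the $\al$-expansion of a real number $x$ is obtained from its regular expansion $[a_1,a_2,\dots]$ by a sequence of \emph{singularizations} $[\dots,a,1,b,\dots]\mapsto[\dots,a+1,b,\dots]$ governed by a rule depending on only boundedly many neighbouring digits (Kraaikamp's $S$-expansions).

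\textbf{Step 1.} We show $\N{S}=\N{T}$. If $S$ is the odd continued fraction map this is precisely the equivalence of normalities proved by J.\ Vandehey that is cited above, so there is nothing more to do. If $S$ is the $\al$-continued fraction map, I would argue as follows. First, using the recursive definition of cylinders, $x$ is $T$-normal exactly when its regular-CF digit sequence is generic for $\mu_T$ in the symbolic sense (every finite word occurs with asymptotic frequency equal to the $\mu_T$-measure of its cylinder), and likewise $S$-normality of $x$ is symbolic genericity of its $\al$-CF digit sequence for $\mu_S$. Then lift both maps to their natural extensions, $\bar T$ on $[0,1]^2$ with the planar Gauss measure and $\bar S$ on Nakada's planar domain $\Omega_\al$, and construct the explicit measure-preserving isomorphism $\Phi\colon([0,1]^2,\bar T)\to(\Omega_\al,\bar S)$ realizing singularization. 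The point is that $\Phi$ intertwines the maps and is compatible with the first-coordinate projections up to a \emph{finite, boundedly-delayed re-coding}: the $\al$-CF digit sequence of $x$ is a sliding-block image of the regular one followed by a deletion whose density $\dg$ has uniformly bounded fluctuation along every orbit. A Birkhoff/averaging argument then transfers symbolic genericity in both directions, for each individual $x$ rather than merely almost everywhere, and (if convenient at an intermediate stage) one may invoke \reft{Vandehey} to replace $\N{T^n}$ by $\N{T}$.

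\textbf{Step 2.} We show that no $m,n\ge 1$ satisfy $S^m=T^n$; this is forced by the invariant measures. Since $T$ is exact, so is $T^n$, hence $T^n$ is totally ergodic; and since the Gauss density is bounded below by a positive constant, $\la$ and $\mu_T$ are mutually absolutely continuous, so any $T^n$-invariant probability $\nu\ll\la$ has a $T^n$-invariant --- hence constant --- density with respect to $\mu_T$ and equals $\mu_T$. Thus $\mu_T$ is the unique absolutely continuous invariant measure of $T^n$, and the same argument (exactness and a density bounded below on the domain, both classical for each candidate $S$) makes $\mu_S$ the unique one for $S^m$. If $S^m=T^n$ as maps of $[0,1]$, these transformations would share their absolutely continuous invariant measure, forcing $\mu_S=\mu_T$; but $S$ was chosen with $\mu_S\ne\mu_T$, a contradiction. (Alternatively: $S^m=T^n$ would force the monotonicity partition of $T^n$ --- the rank-$n$ regular-CF cylinders, with rational endpoints $p/q$ in a rigid pattern --- to equal the rank-$m$ partition for $S$, whose cylinder endpoints carry the imprint of the defining constant; comparing the two endpoint sets again gives a contradiction.)

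\textbf{Main obstacle.} With $S$ the odd continued fraction, the whole weight of the theorem sits inside the cited equivalence $\N{S}=\N{T}$, which historically is the hard and delicate part: a measure-theoretic isomorphism of natural extensions only controls $\mu$-generic points, whereas $\N{S}$ and $\N{T}$ constrain \emph{every} orbit, so one must show the re-coding between the two digit expansions is defined and well-behaved pointwise, with discontinuities and synchronization gaps controlled uniformly and the deletion rate $\dg$ kept honest with no exceptional points. This is exactly the subtlety that made the earlier announcement in this circle incomplete and that the corrected proofs of Kraaikamp--Nakada and of Vandehey are designed to handle; everything else (Step 2, and the symbolic reformulation of normality) is routine by comparison. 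A minor but genuine point is to pick the variant algorithm so that $\mu_S\ne\mu_T$ really holds --- for the $\al$-continued fraction this just means avoiding $\al=1$, where $S$ degenerates to $T$ itself.
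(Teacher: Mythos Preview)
The paper does not prove this theorem. It is quoted in the introduction as a result of Kraaikamp and Nakada \cite{KraaikampNakada}, with no argument supplied here; there is therefore no ``paper's own proof'' to compare your proposal against.

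That said, your proposal is essentially along the lines of what Kraaikamp and Nakada actually do in \cite{KraaikampNakada}: they take $T$ to be the Gauss map and $S$ to be Nakada's $\alpha$-continued fraction transformation (the nearest-integer case), establish $\N{S}=\N{T}$ via the singularization/insertion mechanism on the natural extensions exactly as you sketch in Step~1, and then rule out $S^m=T^n$ by observing that the two maps have distinct absolutely continuous invariant densities, which is your Step~2. So your second route (the $\alpha$-CF option) is not merely plausible but is the original argument.

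Two remarks on your first route. Invoking Vandehey's odd-CF equivalence to dispose of Step~1 is logically valid as a proof of the bare statement, but it is anachronistic and somewhat circular in the narrative of this paper: the Kraaikamp--Nakada theorem (2001) predates Vandehey's result (2014), and the introduction presents them as parallel contributions, so a proof of the former that rests on the latter is not what is being attributed. Second, and more substantively, you correctly identify the real content as the \emph{pointwise} transfer of normality across the recoding --- that the singularization/insertion is governed by a bounded-window rule and that the deletion density is controlled for every $x$, not merely almost every $x$. That is indeed where all the work lies, and your ``Main obstacle'' paragraph accurately flags it; but your Step~1 for the $\alpha$-CF case only gestures at this (``a Birkhoff/averaging argument then transfers symbolic genericity in both directions''), which is precisely the step that is delicate and that Kraaikamp--Nakada carry out carefully. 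As written, your sketch would need that gap filled before it counts as a proof rather than a plan.
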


We will extend W. M. Schmidt's result to Cantor series expansions, a generalization of $b$-ary expansions. 
The study of normal numbers and other statistical properties of real numbers with respect to large classes of Cantor series expansions was  first done by P. Erd\H{o}s and A. R\'{e}nyi in \cite{ErdosRenyiConvergent} and \cite{ErdosRenyiFurther} and by A. R\'{e}nyi in \cite{RenyiProbability}, \cite{Renyi}, and \cite{RenyiSurvey} and by P. Tur\'{a}n in \cite{Turan}.

The $Q$-Cantor series expansions, first studied by G. Cantor in \cite{Cantor},
are a natural generalization of the $b$-ary expansions.\footnote{G. Cantor's motivation to study the Cantor series expansions was to extend the well known proof of the irrationality of the number $e=\sum 1/n!$ to a larger class of numbers.  Results along these lines may be found in the monograph of J. Galambos \cite{Galambos}. } 
A basic sequence is a sequence of integers greater than or equal to $2$.
Given a basic sequence $Q=(q_n)_{n=1}^{\infty}$, the {\it $Q$-Cantor series expansion} of a real number $x$  is the (unique)\footnote{Uniqueness can be proven in the same way as for the $b$-ary expansions.} expansion of the form
\begin{equation} \labeq{cseries}
x=E_0+\sum_{n=1}^{\infty} \frac {E_n} {q_1 q_2 \cdots q_n}
\end{equation}
where $E_0=\floor{x}$ and $E_n$ is in $\{0,1,\cdots,q_n-1\}$ for $n\geq 1$ with $E_n \neq q_n-1$ infinitely often. We abbreviate \refeq{cseries} with the notation $x=E_0.E_1E_2E_3\cdots$ w.r.t. $Q$.


For a basic sequence $Q=(q_n)$, a block $B=(b_1, b_2, \cdots, b_k)$, and a natural number $j$, define
$$
\IB = \begin{cases}
1 &\text{ if } b_1 < q_j, b_2 < q_{j+1}, \cdots, b_k < q_{j+k-1} \\
0 &\text{ otherwise}
\end{cases}
$$
and let
$$
Q_n(B)=\sum_{j=1}^n \frac {\IB} {q_j q_{j+1} \cdots q_{j+k-1}}.
$$
We also define
$$
T_{Q,n}(x) = q_n q_{n-1} \cdots q_1 x \pmod{1}.
$$


A. R\'enyi \cite{Renyi} defined a real number $x$ to be {\it normal} with respect to $Q$ if for all blocks $B$ of length $1$,
\begin{equation}\labeq{rnormal}
\lim_{n \rightarrow \infty} \frac {N_n^Q (B,x)}{\sum_{i=1}^n 1/q_i}=1,
\end{equation}
where $N_n^Q(B,x)$ is the number of occurences of the block $B$ in the sequence $(E_i)_{i=1}^n$ of the first $n$ digits in the $Q$-Cantor series expansion of $x$. If $q_n=b$ for all $n$ and we restrict $B$ to consist of only digits less than $b$, then \refeq{rnormal} is equivalent to {\it simple normality in base $b$}, but not equivalent to normality in base $b$. 

 A great deal of information about the $b$-ary expansion of a real number $x$ may be obtained by studying the distributional properties of the sequence $(b^n x)_{n=0}^\infty$.  For example, it is well known that a real number $x$ is normal in base $b$ if and only if the sequence $(b^n x)$ is uniformly distributed mod $1$. 

A real number $x$  is {\it $Q$-normal} if\footnote{We choose to take a slightly different definition for $Q$-normality than is used elsewhere in the literature. Our definition is more appropriate for bounded basic sequences.} for all blocks $B$ such that $\lim_{n \to \infty} Q_n(B) = \infty$
$$
\lim_{n \rightarrow \infty} \frac {N_n^Q (B,x)} {Q_n(B)}=1.
$$
Let $\NQ$ be the set of $Q$-normal numbers.
A real number~$x$ is {\it $Q$-distribution normal} if
the sequence $(T_{Q,n}(x))_{n=0}^\infty$ is uniformly distributed mod $1$.  Let $\DNQ$ be the set of $Q$-distribution normal numbers.


Note that in base~$b$, where $q_n=b$ for all $n$,
 the corresponding notions of $Q$-normality and $Q$-distribution normality are equivalent. This equivalence
is fundamental in the study of normality in base $b$. Note that $Q$-normality and $Q$-distribution normality are not equivalent for all basic sequences \cite{ppq1}.


For an eventually periodic basic sequence $Q$, we define a {\it period} of $Q$ to be a tuple $(c_1, c_2, \cdots, c_m)$ such that $Q$ can be written as 
$$
(d_1, d_2, \cdots, d_k, \overline{c_1, c_2, \cdots, c_m}).
$$
The main goal of this paper will be to prove the following theorem.
\begin{thrm}\labt{main}
Let $Q$ be an eventually periodic sequence with period $(c_1, c_2, \cdots, c_m)$. Set $b = \prod_{i=1}^m c_i$ and suppose that $g \sim b$. Then for any real number $x$, the following are equivalent:
\begin{enumerate}
\item $x$ is normal in base $g$
\item $x$ is $Q$-normal
\item $x$ is $Q$-distribution normal
\end{enumerate}
\end{thrm}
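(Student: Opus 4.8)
The plan is to reduce to a purely periodic basic sequence and then establish the four implications $(1)\Rightarrow(2)$, $(1)\Rightarrow(3)$, $(3)\Rightarrow(1)$ and $(2)\Rightarrow(1)$. First, since $g\sim b$, \reft{Schmidt} gives $\N{g}=\N{b}$, so we may take $g=b$. Next, write $Q=(d_1,\dots,d_k,\overline{c_1,\dots,c_m})$ and put $z=\tkq{x}$, so that $z$ has the purely periodic basic sequence $Q'=(\overline{c_1,\dots,c_m})$ and $Q'$-digits $E_{k+1},E_{k+2},\dots$. Since $z$ is obtained from $x$ by multiplication by the nonzero integer $d_1\cdots d_k$ followed by reduction mod $1$, normality in base $b$ is preserved in both directions (Wall's theorem), so $x\in\N{b}\iff z\in\N{b}$; and because $N_n^{Q}(B,x)=N_{n-k}^{Q'}(B,z)+O(1)$, $Q_n(B)=Q'_{n-k}(B)+O(1)$, and the sequences $(\tnq{x})_{n\ge k}$, $(T_{Q',n}(z))_{n\ge0}$ coincide, $x$ is $Q$-normal (resp. $Q$-distribution normal) iff $z$ is $Q'$-normal (resp. $Q'$-distribution normal). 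Finally, if $(c_1,\dots,c_m)$ itself has a shorter period $(c_1,\dots,c_d)$, $d\mid m$, then $b=(c_1\cdots c_d)^{m/d}\sim c_1\cdots c_d$, so we may assume the period is primitive; the case $m=1$ is classical (then $Q'$ is the base-$b$ expansion), so assume $m\ge2$.

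The combinatorial backbone is a grouping correspondence. For $1\le p\le m$ set $z^{(p)}=T_{Q',p-1}(z)=(c_1\cdots c_{p-1})z\bmod1$; reading the $Q'$-digits of $z$ in packets of $m$ starting from a position of residue $p$ and applying the mixed-radix bijection $\prod_i\{0,\dots,c_{p+i-1}-1\}\to\{0,\dots,b-1\}$ yields exactly the base-$b$ digits $F^{(p)}_1,F^{(p)}_2,\dots$ of $z^{(p)}$. Two arithmetic facts are used repeatedly: any window of $Q'$-denominators of length $km$ has product $b^k$, independently of its starting position; and, by primitivity, the ``top packet'' $[F^{(p)}]:=(c_p-1,\dots,c_{p+m-1}-1)$ can occur in a $Q'$-digit sequence only at a position of residue $p$. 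For $(1)\Rightarrow(2)$ and $(1)\Rightarrow(3)$: if $z\in\N{b}$ then each $z^{(p)}=(c_1\cdots c_{p-1})z\bmod1\in\N{b}$, since multiplication by a nonzero integer preserves uniform distribution of $(b^jz)_j$. Given a block $B$ with $Q'_n(B)\to\infty$, decompose $N_n^{Q'}(B,z)$ and $Q'_n(B)$ by the residue $p$ of the starting position; ``$B$ occurs at a residue-$p$ position'' is a fixed partial pattern on finitely many packets $F^{(p)}_j$, so normality of $z^{(p)}$ gives $N_n^{Q'}(B,z)=\tfrac nm\sum_p\pi_p(B)+o(n)=Q'_n(B)(1+o(1))$, where $\pi_p(B)=1/(q'_s\cdots q'_{s+|B|-1})$ for $s$ of residue $p$; this is $(2)$. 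For $(3)$, the residue-$i$ subsequence of $(T_{Q',r}z)_{r\ge0}$ equals $(b^jz^{(i+1)})_j$, which is uniformly distributed mod $1$, and a finite union of uniformly distributed subsequences along residue classes is uniformly distributed.

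For $(3)\Rightarrow(1)$, let $\nu$ be any weak-$*$ limit point of $\tfrac1N\sum_{j<N}\delta_{b^jz\bmod1}$; it is invariant under $M_b\colon x\mapsto bx\bmod1$. Writing $\gamma_i=c_1\cdots c_i$ and using the residue splitting, $\tfrac1{Nm}\sum_{r<Nm}\delta_{T_{Q',r}z}=\tfrac1m\sum_{i<m}(M_{\gamma_i})_*\bigl(\tfrac1N\sum_{j<N}\delta_{b^jz\bmod1}\bigr)$, so $(3)$ forces $\tfrac1m\sum_{i<m}(M_{\gamma_i})_*\nu=\lambda$; the $i=0$ term gives $\nu\le m\lambda$, hence $\nu\ll\lambda$, and ergodicity of $M_b$ for $\lambda$ yields $\nu=\lambda$. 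Thus every limit point is $\lambda$, $(b^jz)_j$ is uniformly distributed, and $z\in\N{b}$.

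The implication $(2)\Rightarrow(1)$ is the heart of the matter and the step I expect to be the main obstacle. Fix $p$; the goal is that $Q'$-normality of $z$ forces $(F^{(p)}_j)_j$ to be a normal sequence in base $b$ (taking $p=1$ then gives $z\in\N{b}$). The leverage is that a $Q'$-block ending with $[F^{(p)}]$ placed at a residue-$p$ position can occur \emph{only} at residue-$p$ positions, so $Q'$-normality pins down its occurrence count exactly, removing the residue ambiguity that afflicts a general block. Applying this to each block ``($Tm$ digits at residue $p$ containing no $m$-window equal to the top packet) followed by $[F^{(p)}]$'' and summing the $Q'$-normality estimates over the $(b-1)^T$ admissible prefixes yields a recursion for $a_T:=\mathrm{freq}_j\{F^{(p)}_j\ne b-1,\dots,F^{(p)}_{j+T-1}\ne b-1\}$ with solution $a_T=(1-1/b)^T$; in particular ``no later packet equals $b-1$'' has frequency $0$. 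Now take any $B^*$ fitting at residue $p$, which we may assume of length $km$, and decompose an occurrence of $B^*$ at a residue-$p$ position $j$ according to the first $t\ge1$ with $F^{(p)}_{j+k+t-1}=b-1$: for finite $t$ the corresponding count is that of ``$B^*$, then a top-packet-free run of length $(t-1)m$, then $[F^{(p)}]$'', which again fits only at residue $p$, so $Q'$-normality evaluates its frequency (summed over admissible runs) to $\tfrac1b(1-1/b)^{t-1}\pi_p(B^*)$; the ``$t=\infty$'' leftover has frequency $0$ by the first step. Summing the geometric series gives $\mathrm{freq}_{\mathrm{res}\,p}(B^*)=\pi_p(B^*)=b^{-k}$, and as $B^*$ runs over residue-$p$ blocks of length $km$ this says exactly that every base-$b$ word of length $k$ occurs in $(F^{(p)}_j)_j$ with the right frequency. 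The delicate points — and why I flag this as the hard step — are keeping the residue bookkeeping honest, and realizing that one must bootstrap through the top-packet-free-run decomposition: stripping a trailing forced packet by summing over a single adjacent packet merely displaces it, whereas the run decomposition makes the two geometric series close.
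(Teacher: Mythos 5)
Your proof is correct, and three of its four implications track the paper's argument closely: the reduction to a purely periodic, primitive period via the shift $z=\tkq{x}$ and \reft{Schmidt}, and the implications $(1)\Rightarrow(2)$, $(1)\Rightarrow(3)$ via the residue decomposition $T_{Q,mn+r}(x)=b^n(c_1\cdots c_r)x \bmod 1$ together with preservation of base-$b$ normality under integer multiplication, are exactly what the paper does. The two remaining steps are where you genuinely diverge. For $(3)\Rightarrow(1)$ the paper cites the Pyatetskii--Shapiro hot-spot theorem with constant $m$, derived from precisely the inequality $\nu\le m\lambda$ you extract from the $i=0$ term; your weak-$*$/ergodicity argument is a self-contained proof of the special case needed, so nothing is lost and the dependence on \reft{hotspot} is removed. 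For $(2)\Rightarrow(1)$ --- which you rightly flag as the crux --- both arguments hinge on the same key fact: a block containing the full ``top packet'' $(c_p-1,\dots,c_{p+m-1}-1)$ can occur only at one residue class mod $m$ (because the fitting conditions $c_{n+i}\ge c_{p+i}$ together with equal products over a period force equalities, and primitivity then pins the shift --- a small step worth writing out, since ``by primitivity'' alone does not suffice). The executions differ: the paper places the forced packet at the \emph{front} of the block, notes that the resulting intervals $J_0(B)$ exhaust $[0,1]$ up to a null set $\mathscr{C}$ never visited by the orbit, and sandwiches an arbitrary interval between finite unions of such intervals to get uniform distribution of $(b^ix)$ directly; you place it at the \emph{end} after a variable top-packet-free gap and close a geometric series to compute exact digit-block frequencies of $z^{(p)}$. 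The paper's route is shorter and avoids your two-stage bootstrap (first $a_T=(1-1/b)^T$, then the renewal decomposition); yours avoids the measure-theoretic approximation and produces the block frequencies explicitly. Both are valid.
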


Note that \reft{main} is an extension of one direction of Schmidt's result to Cantor series expansions. Some of the ideas in J. Vandehey's proof of \reft{Vandehey} were used in the proof of \reft{main}. We also prove the following theorem which shows some limitations on how much \reft{main} can be generalized.

\begin{thrm}\labt{secondthrm}
For every real number $x$ and integer $g\geq 2$, there exists a basic sequence $Q = (q_n)$ where $q_n = g^{k_n}$ for some sequence of natural numbers $(k_n)$ such that $x \notin \NQ \cup \DNQ$. Thus there exists a basic sequence $Q = (q_n)$ where $q_n = g^{k_n}$ for some sequence of natural numbers $(k_n)$ such that $\N{g} \neq \N{Q}$ and $\N{g} \neq \DN{Q}$.
\end{thrm}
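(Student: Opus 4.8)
The plan is to split on whether $x$ is normal in base $g$. If $x\notin\N{g}$, take the constant sequence $Q=(g,g,g,\dots)$; this is eventually periodic with period $(g)$, its associated product is $b=g$, and $g\sim g$, so \reft{main} gives $\NQ=\DNQ=\N{g}$ and hence $x\notin\NQ\cup\DNQ$, with $q_n=g^1$ of the required form. (One can also verify directly from the definitions that for this $Q$ a number is $Q$-normal iff it is base-$g$ normal, and $Q$-distribution normal iff $(g^nx)$ is uniformly distributed mod $1$, i.e.\ iff it is base-$g$ normal.) So the substance is the case $x\in\N{g}$, where $x$ is automatically irrational and its base-$g$ digit string $x-\lfloor x\rfloor=0.a_1a_2a_3\cdots$ contains every finite block with its expected frequency.

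For $x\in\N{g}$, let $J=\{j\ge 1 : a_j\neq 0 \text{ and } a_{j+1}=0\}$, enumerate $J=\{K_1<K_2<\cdots\}$, put $K_0=0$ and $k_n=K_n-K_{n-1}\ge 1$, and define the basic sequence $Q=(q_n)$ by $q_n=g^{k_n}$. Since $q_1\cdots q_n=g^{K_n}$, the $n$-th $Q$-Cantor digit $E_n$ of $x$ is exactly the base-$g$ integer with digit string $(a_{K_{n-1}+1},\dots,a_{K_n})$, and $T_{Q,n}(x)=g^{K_n}(x-\lfloor x\rfloor)\bmod 1=\{0.a_{K_n+1}a_{K_n+2}\cdots\}$ for $n\ge 1$. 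Two features of $Q$ drive everything: because $K_n\in J$ we have $a_{K_n}\neq 0$, so the last base-$g$ digit of $E_n$ is nonzero and therefore $E_n\neq 0$ for all $n$; and again because $K_n\in J$ we have $a_{K_n+1}=0$, so $T_{Q,n}(x)\in[0,1/g)$ for all $n\ge 1$.

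The second feature immediately gives $x\notin\DNQ$: the orbit $(T_{Q,n}(x))_{n\ge 0}$ lies in $[0,1/g)$ from $n=1$ on, so it is not uniformly distributed mod $1$ (here $g\ge 2$). For $x\notin\NQ$ we use the block $B=(0)$: since $0<q_j$ for every $j$ one has $\mathcal{I}_{Q,j}(B)=1$ for all $j$, hence $Q_n((0))=\sum_{j=1}^n g^{-k_j}$. By normality of $x$ the set $J$ has density $\tfrac{g-1}{g^2}>0$, so $K_n/n$ converges to a finite constant, and convexity of $u\mapsto g^{-u}$ (Jensen) gives $\tfrac1n\sum_{j\le n}g^{-k_j}\ge g^{-K_n/n}$, which stays bounded below by a positive constant; thus $Q_n((0))\to\infty$. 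On the other hand $N_n^Q((0),x)=\#\{j\le n:E_j=0\}=0$ by the first feature, so $N_n^Q((0),x)/Q_n((0))\equiv 0\not\to 1$ and $x$ is not $Q$-normal. Combining, $x\notin\NQ\cup\DNQ$. The final assertion of the theorem then follows by applying the first part to any $x_0$ normal in base $g$ (such numbers exist): the resulting $Q$ satisfies $x_0\in\N{g}$ but $x_0\notin\NQ$ and $x_0\notin\DNQ$, so $\N{g}\neq\NQ$ and $\N{g}\neq\DNQ$.

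I expect the only genuine estimate to be $Q_n((0))\to\infty$, i.e.\ $\sum_n g^{-k_n}=\infty$; this is precisely why it is convenient to reduce to $x\in\N{g}$ rather than attempting arbitrary $x$ with infinitely many nonzero base-$g$ digits, since for such $x$ the set $J$ can be far too sparse (or empty). The conceptual point is the choice of the cut set $J$: a single local feature — a nonzero digit immediately followed by a zero — simultaneously forces $E_n\neq 0$ (killing $Q$-normality via the block $(0)$) and forces $T_{Q,n}(x)$ into $[0,1/g)$ (killing $Q$-distribution normality).
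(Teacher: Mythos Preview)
Your proof is correct and proves the theorem as stated, but it takes a genuinely different route from the paper's argument.

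The paper starts from the constant sequence $Q_0=(g^2,g^2,\ldots)$: if $x$ is not $Q_0$-normal, it is done; otherwise it \emph{splits} each $Q_0$-position where the digit $g^2-1$ occurs into two base-$g$ positions, producing a new sequence $P$ with $p_n\in\{g,g^2\}$. In the $P$-expansion the digit $g^2-1$ never appears (killing $P$-normality), and a direct frequency count on the interval $[0,2/g)$ gives a limit $(2g+1)/g^2\neq 2/g$ (killing $P$-distribution normality). Your construction instead \emph{merges} base-$g$ digits, cutting at positions $j$ with $a_j\neq 0$, $a_{j+1}=0$; this forces every $E_n\neq 0$ and every $T_{Q,n}(x)\in[0,1/g)$, so both failures are immediate once you verify $Q_n((0))\to\infty$ via the density of the cut set and Jensen.

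What each buys: your argument is cleaner on the dynamical side (the orbit is trapped in $[0,1/g)$, no computation needed) and uses only the simplest block $(0)$ for non-normality. The paper's construction, however, yields a \emph{bounded} basic sequence ($k_n\in\{1,2\}$), which is precisely what the abstract promises when it says ``boundedness of the basic sequence is not sufficient for this equivalence.'' Your $Q$ is typically unbounded: for a normal $x$ the gaps $k_n=K_n-K_{n-1}$ are unbounded, since arbitrarily long runs avoiding the pattern ``nonzero followed by zero'' occur. So while your proof establishes the theorem exactly as written, it does not recover the stronger boundedness conclusion that motivates the result in context.
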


\section{Proofs}

We will need the following theorem due to I. I. Shapiro-Pyatetskii in \cite{PyatetskiiShapiro}.
\begin{thrm}[I. I. Shapiro-Pyatetskii, 1951]\labt{hotspot}
If there exists a constant $C$ such that for every interval $E \subseteq [0,1]$ we have
$$
\limsup_{n \to \infty} \frac{\# \{ i< n : b^i x \in E\}}{n} \leq C \lambda(E)
$$
then $x$ is normal in base $b$.
\end{thrm}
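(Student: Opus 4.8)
The plan is to recast the hypothesis in terms of the empirical measures
\[
\mu_n := \frac1n\sum_{i=0}^{n-1}\delta_{\{b^i x\}}
\]
on $[0,1]$, and to show that it forces $\mu_n\to\lambda$ in the weak-$*$ topology, where $\lambda$ is Lebesgue measure. Since weak-$*$ convergence to $\lambda$ is exactly the statement that $(b^n x)$ is uniformly distributed mod $1$ — equivalently, that all $b$-adic digit-block frequencies of $x$ converge to their uniform values — this is precisely normality of $x$ in base $b$ (the $b$-ary map being the relevant number-theoretic transformation).

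First I would fix a subsequence along which $\mu_{n_k}\to\mu$ weak-$*$; such a subsequence exists because the space of Borel probability measures on the compact set $[0,1]$ is weak-$*$ compact and metrizable. For an open interval $J=\mathrm{int}\,E$, the Portmanteau theorem gives $\mu(J)\le\liminf_k\mu_{n_k}(J)\le\limsup_n\mu_n(E)\le C\lambda(E)=C\lambda(J)$ by the hypothesis. Letting $J$ range over all open intervals, then over open sets, then over Borel sets via outer regularity, we get $\mu\le C\lambda$ as measures; in particular $\mu\ll\lambda$ and $\mu$ is non-atomic.

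Next I would verify that $\mu$ is invariant under the map $T(y)=by\bmod 1$ by testing invariance only on $b$-adic intervals $I=[a\,b^{-k},(a+1)b^{-k})$. For such $I$, the preimage $T^{-1}I$ is a disjoint union of $b$ $b$-adic intervals of length $b^{-(k+1)}$, and $\{b^i x\in T^{-1}I\}\Leftrightarrow\{b^{i+1}x\in I\}$, so $|\mu_n(T^{-1}I)-\mu_n(I)|\le 2/n\to0$. Since $\mu\ll\lambda$, the (finite) boundaries of $I$ and of $T^{-1}I$ are $\mu$-null, hence $\mu_{n_k}(I)\to\mu(I)$ and $\mu_{n_k}(T^{-1}I)\to\mu(T^{-1}I)$, giving $\mu(I)=\mu(T^{-1}I)$ for every $b$-adic interval $I$. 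As the $b$-adic intervals generate the Borel $\sigma$-algebra (and finite unions of them form an algebra), this forces $\mu\circ T^{-1}=\mu$. Now $\lambda$ is the unique $T$-invariant Borel probability measure absolutely continuous with respect to $\lambda$ — equivalently, the only nonnegative $L^1$ fixed point of the transfer operator $(Lh)(y)=\frac1b\sum_{j=0}^{b-1}h\!\left(\frac{y+j}{b}\right)$ with $\int_0^1 h=1$ is $h\equiv1$ — which is the classical exactness of the $b$-ary transformation. Therefore $\mu=\lambda$. Since every weak-$*$ subsequential limit of $(\mu_n)$ equals $\lambda$, the full sequence converges: $\mu_n\to\lambda$, which is normality in base $b$.

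The point requiring care is that $T$ is \emph{not} continuous — it jumps at the points $j/b$ — so one cannot simply push the weak-$*$ limit through $T$. The remedy is exactly the one used above: test invariance only against indicators of $b$-adic intervals, whose boundaries are $\mu$-null, and this is legitimate precisely because the hypothesis has already been spent to guarantee $\mu\ll\lambda$. A trivial case to mention at the start is that any $x$ whose forward orbit closure under $T$ is finite (in particular any rational $x$) violates the hypothesis outright, since then some limit measure $\mu$ carries an atom and the hypothesis applied to a short interval around that atom fails.
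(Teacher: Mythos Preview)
The paper does not prove this theorem; it is quoted from Pyatetskii--Shapiro's 1951 paper and used as a black box (specifically in the implication $(3)\Rightarrow(1)$ of \reft{main}). So there is no proof in the paper to compare yours against.

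Your argument is correct and is essentially the standard modern ergodic-theoretic proof of the hot-spot lemma. The chain is sound: weak-$*$ compactness produces subsequential limits $\mu$; the hypothesis together with Portmanteau forces $\mu\le C\lambda$ on intervals, hence on all Borel sets by outer regularity, so $\mu\ll\lambda$ and $\mu$ is non-atomic; the telescoping bound $|\mu_n(T^{-1}I)-\mu_n(I)|\le 2/n$ combined with $\mu(\partial I)=0$ yields $T$-invariance of $\mu$ on the generating algebra of $b$-adic intervals and hence on the full Borel $\sigma$-algebra; and uniqueness of the absolutely continuous invariant measure then gives $\mu=\lambda$.

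One small simplification: you do not need exactness or the transfer operator for the last step. Ergodicity of $(T,\lambda)$ already suffices: since $\mu\ll\lambda$, Birkhoff's theorem gives $\frac1n\sum_{i<n}g(T^iy)\to\int g\,d\lambda$ for $\mu$-a.e.\ $y$ and every $g\in C([0,1])$; integrating against $\mu$ and using $T$-invariance of $\mu$ with bounded convergence yields $\int g\,d\mu=\int g\,d\lambda$, so $\mu=\lambda$. Your closing remark about points with finite orbit is correct but unnecessary --- such $x$ simply fail the hypothesis, and the theorem is vacuous for them.
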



\begin{lem}\labl{normequiv}
If $Q=(d_1, \cdots, d_k, \overline{c_1, \cdots, c_m})$ and $P = (\overline{c_1, \cdots, c_m})$, then $x$ is $Q$-normal (resp. $Q$-distribution normal) if and only if $d_1 \cdots d_k x$ is $P$-normal (resp. $P$-distribution normal).
\end{lem}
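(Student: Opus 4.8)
The plan is to observe that multiplication by the integer $d:=d_1\cdots d_k$ is exactly the change of variables that deletes the first $k$ terms of the basic sequence, i.e.\ that carries the $Q$-expansion onto the purely periodic $P$-expansion after a shift of $k$ digits; once this is set up, $Q$-distribution normality transports across the shift exactly, and $Q$-normality transports with bounded error terms. Write $Q=(q_n)$, so that $q_n=d_n$ for $n\le k$ and $q_{k+j}=p_j$ for $j\ge 1$, where $P=(p_n)$.

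First I would record the digit identity. If $x=E_0.E_1E_2\cdots$ w.r.t.\ $Q$, multiply \refeq{cseries} by $d$; since $d/(q_1\cdots q_n)=q_{n+1}\cdots q_k\in\mathbb{Z}$ when $n\le k$ and $d/(q_1\cdots q_n)=1/(p_1\cdots p_{n-k})$ when $n>k$, one gets $dx=M+\sum_{j\ge 1}E_{j+k}/(p_1\cdots p_j)$ with $M\in\mathbb{Z}$. Because $E_{j+k}\in\{0,\dots,q_{j+k}-1\}=\{0,\dots,p_j-1\}$, and because $E_n\ne q_n-1$ for infinitely many $n$ — hence for infinitely many $n>k$, i.e.\ $E_{j+k}\ne p_j-1$ for infinitely many $j$ — the right side is a legitimate $P$-Cantor series expansion, so by uniqueness it \emph{is} the $P$-Cantor series expansion of $dx$, and the $j$-th digit of $dx$ w.r.t.\ $P$ equals $E_{j+k}$ for every $j\ge 1$.

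Next, for $Q$-distribution normality I would use that $q_nq_{n-1}\cdots q_1=d\,(p_{n-k}\cdots p_1)$ for $n\ge k$, so $T_{Q,n}(x)=(p_{n-k}\cdots p_1)(dx)\equiv T_{P,n-k}(dx)\pmod 1$; thus the tail $(T_{Q,n}(x))_{n\ge k}$ is literally the sequence $(T_{P,j}(dx))_{j\ge 0}$, and since uniform distribution mod $1$ is insensitive to deleting finitely many terms, $x$ is $Q$-distribution normal iff $dx$ is $P$-distribution normal. For $Q$-normality, fix a block $B$ of length $\ell$. By the digit identity, the occurrences of $B$ starting at positions $>k$ in the $Q$-digit string of $x$ are precisely the occurrences of $B$ in the $P$-digit string of $dx$ (shifted by $k$), while there are at most $k$ occurrences starting at positions $\le k$, so $0\le N_n^Q(B,x)-N_{n-k}^P(B,dx)\le k$ for $n>k$. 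Similarly $\mathcal{I}_{Q,j}(B)=\mathcal{I}_{P,j-k}(B)$ and $q_j\cdots q_{j+\ell-1}=p_{j-k}\cdots p_{j-k+\ell-1}$ for $j>k$, giving $Q_n(B)=P_{n-k}(B)+c_B$ for $n\ge k$, with $0\le c_B\le k$ constant. Hence $Q_n(B)\to\infty$ iff $P_n(B)\to\infty$, and for any such $B$ the ratios $N_n^Q(B,x)/Q_n(B)$ and $N_{n-k}^P(B,dx)/P_{n-k}(B)$ tend to $1$ together (both numerators and both denominators differ by bounded amounts while the denominators diverge); ranging over all such $B$ shows $x$ is $Q$-normal iff $dx$ is $P$-normal.

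The hard part is really just bookkeeping: keeping the index shift $j\leftrightarrow j+k$ consistent simultaneously among the digits, the counting functions $N_n$, the indicators $\mathcal{I}_{Q,j}(B)$, and the normalizers $Q_n(B)$, and verifying that the ``$E_n\ne q_n-1$ infinitely often'' clause survives the truncation to indices $>k$ so that uniqueness of the Cantor series expansion may be invoked. Beyond the elementary identity $\prod_{i=1}^n q_i=d\prod_{j=1}^{n-k}p_j$, no estimate is involved.
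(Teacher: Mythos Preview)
Your proposal is correct and follows essentially the same approach as the paper: identify $T_{Q,k}(x)$ with the fractional part of $dx$ so that the $P$-digits of $dx$ are the shifted $Q$-digits $E_{k+j}$, then use that both the block counts $N_n$ and the normalizers $Q_n(B)$, $P_n(B)$ differ only by bounded constants, while for distribution normality the identity $T_{P,n}(dx)=T_{Q,n+k}(x)$ transports uniform distribution directly. Your write-up is in fact somewhat more careful than the paper's---you explicitly verify the ``$E_n\ne q_n-1$ infinitely often'' clause survives truncation so that uniqueness of the Cantor series can be invoked, and you track the index shift precisely rather than absorbing it into an $O(1)$---but the underlying argument is the same.
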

\begin{proof}
Let $B$ be a block of length $\ell$ such that $\lim_{n \to \infty} Q_n(B) = \infty$. If $x = 0.E_1 E_2 \cdots\wrt{Q}$, then
$$
d_1 \cdots d_k x \pmod{1}= T_{Q,k}(x) = 0.E_{k+1} E_{k+2}\cdots \wrt{P}.
$$ 
 Thus $N_{n}^P(B,d_1 \cdots d_k x) = N_n^Q(B,x)+O(1)$. Furthermore, 
$$
Q_n(B) = P_n(B) + \sum_{i=1}^k \frac{\mathcal{I}_i(B)}{q_i \cdots q_{i+\ell-1}} = P_n(B)+O(1).
$$ 
Thus
$$
\lim_{n \to \infty} \frac{N_n^P(B,d_1 \cdots d_k x)}{P_n(B)} = \lim_{n \to \infty} \frac{N_n^Q(B,x)+O(1)}{Q_n(B)+O(1)}= \lim_{n \to \infty} \frac{N_n^Q(B,x)}{Q_n(B)},
$$
so $d_1 \cdots d_k x$ is $P$-normal if and only if $x$ is $Q$-normal.

Note that $T_{P,n}(d_1 \cdots d_k x) = T_{Q,n+k}(x)$. Thus, the sequence $(T_{Q,n}(x))$ is u.d. mod 1 if and only if $(T_{P,n}(d_1 \cdots d_k x))$ is u.d. mod 1.
\end{proof}

Define 
$$
J_r(B) = \left [ \frac{b_1 c_{r+1} \cdots c_{r+k-1} + \cdots + b_k}{c_r c_{r+1} \cdots c_{r+k-1}} , \frac{b_1 c_{r+1} \cdots c_{r+k-1} + \cdots + b_k + 1 }{c_r c_{r+1} \cdots c_{r+k-1}} \right )
$$ 
if $\ \IBn{r} = 1$ and $\emptyset$ if $\IBn{r}=0$ for $0 \leq r < m$.
We may now prove \reft{main}.

Suppose that $Q$ can be written as $(d_1, d_2, \cdots, d_k, \overline{c_1, c_2, \cdots, c_m})$. By \refl{normequiv}, $x$ is $Q$-normal (resp. $Q$-distribution normal) if and only if $d_1 \cdots d_k x$ is $P$-normal (resp. $P$-distribution normal), where 
$P = (\overline{c_1, c_2, \cdots, c_m})$. Similarly, $x$ is normal in base $b$ if and only if $d_1 \cdots d_k x$ is normal in base $b$. 
Thus we need only show equivalence of normality for periodic basic sequences. The same argument holds for $Q$-distribution normality. By \reft{Schmidt} we have that $\N{g} = \N{b}$, so we only need to show that $\N{Q} = \N{b}$.

Thus, we may assume that $Q$ is periodic, with period $(c_1, c_2, \cdots, c_m )$ of minimal length. If $m=1$, then we are in the b-ary case so the statements hold trivially. Thus we may assume that $m>1$.
\begin{itemize}
\item  $(1) \implies (3)$.
Suppose that $x$ is normal in base $b$. Then the sequence $(b^n x)$ is u.d. mod 1. Note that the sequence $(T_{Q,n}(x))$ can be decomposed into the subsequences $(T_{Q,mn+r}(x))$ for $0 \leq r < m$, and each subsequence $(T_{Q,mn+r}(x))$ can be rewritten as $(b^n c_1 c_2 \cdots c_r x)$. Since $c_1 c_2 \cdots c_r x$ is an integer multiple of a number that is normal in base $b$, the real number $c_1 c_2 \cdots c_r x$ is normal in base $b$, so $(T_{Q,mn+r}(x))$ is u.d. mod 1. 
Thus, the sequence $(T_{Q,n}(x))$ is u.d. mod 1, so $x$ is $Q$-distribution normal.
\item $(3) \implies (1)$.
Suppose $(T_{Q,n}(x))$ is u.d. mod 1. Then for any interval $E \subseteq [0,1]$, we have 
\begin{align*}
&\limsup_{n \to \infty} \frac{\# \{i \leq n : b^i x \in E \}}{n} \leq \limsup_{n \to \infty} \frac{\# \{i < mn : T_{Q,i}(x) \in E\}}{n} \\
&\leq m \limsup_{n \to \infty} \frac{\#\{i \leq mn : T_{Q,i}(x) \in E\}}{mn} = m \lambda(E).
\end{align*}
Thus by \reft{hotspot}, we have $(b^n x)$ is u.d. mod 1, so $x$ is normal in base~$b$.

\item $(1) \implies (2)$.
Suppose that $x$ is normal in base $b$. Let $B = (b_1, b_2, \cdots, b_k)$ be a block of digits of length $k$. 
The block $B$ occurs at position $n$ in $x$ if and only if
$$
x = \sum_{i=1}^{n-1} \frac{E_i}{c_1 \cdots c_i} + \frac{b_1}{c_1 \cdots c_n} + \frac{b_2}{c_1 \cdots c_{n+1}} + \cdots + \frac{b_k}{c_1 \cdots c_{n+k-1}} + \sum_{i = n+k}^\infty \frac{E_i}{c_1 \cdots c_i}
$$ 
is the $Q$-Cantor series expansion of $x$. That is, if $n \equiv r \mod m$, then 
$
T_{Q,n-1}(x) \in J_r(B).$
Thus
\begin{equation}\labeq{onetwo}
N_n^Q(B,x) = \sum_{r = 0}^{m-1} A_{\floor{n/m}}(J_r(B),(T_{Q,mn+r}(x))) + O(1).
\end{equation}
Since $x$ is normal in base $b$, the sequence $(b^n x)$ is u.d. mod 1. By \refeq{onetwo}
\begin{align*}
\lim_{n \to \infty} \frac{N_n^Q(B,x)}{n} &
= \sum_{r =0}^{m-1}  \lim_{n \to \infty} \frac{\floor{n/m}}{n} \cdot\frac{A_{\floor{n/m}}(J_r(B),(T_{Q,mn+r}(x)))}{\floor{n/m}} \\ 
&= \frac{1}{m} \sum_{r = 0}^{m-1} \lambda(J_r(B)).
\end{align*}
Moreover,
\begin{align*}
&\lim_{n \to \infty} \frac{Q_{n}(B)}{n} = \lim_{n \to \infty} \frac{1}{n} \sum_{i = 0}^{n-1} \frac{\mathcal{I}_i(B)}{c_i c_{i+1} \cdots c_{i+k-1}} \\ 
= &\lim_{n \to \infty} \frac{1}{n} \floor{\frac{n}{m}} \sum_{r = 0}^{m-1} \frac{\mathcal{I}_r(B)}{c_r \cdots c_{r+k-1}}  = \frac{1}{m} \sum_{r = 0}^{m-1} \lambda(J_r(B)).
\end{align*}
Thus we have that $\lim_{n \to \infty} \frac{N_n^Q(B,x)}{Q_n(B)} = 1$, so $x$ is $Q$-normal.

\item $(2) \implies (1)$.
Suppose that $x$ is $Q$-normal. Since $m>1$,  there is an infinite family of blocks $\mathcal{B}$ which can only occur at positions $n \equiv 0 \mod m$. For example, consider blocks of the form 
$$
(c_m -1, c_1 -1, \cdots c_{m-1} - 1,b_{m+1},\cdots,b_k).
$$
Let $B \in \mathcal{B}$. Then for $r \neq 0$, we have that $J_r(B) = \emptyset$, so 
\begin{align*}
&\lim_{n \to \infty} \frac{A_n(J_0(B),(b^i x))}{n} 
= \lim_{n \to \infty} \frac{A_n(J_0(B),(T_{Q,mi}(x)))}{n}\\
=& \lim_{n \to \infty} \frac{N_{mn}^Q(B,x)}{n} 
= \lambda(J_0(B)).
\end{align*}
We will show that the union of all intervals of the form $J_0(B)$ has full measure. To do this, let 
$$
\mathscr{C} = [0,1] \backslash \bigcup_{B \in \mathcal{B}} J_0(B).
$$ 
Let $C_0 = [0,1]$ and define recursively 
$$
C_i = \{ x \in C_{i-1} : E_{mi+r} \neq c_r-1 \text{ for some } 0\leq r <m \}.
$$ 
Then $\lambda(C_i) = (1 - \frac{1}{c_1 \cdots c_{m}}) \lambda(C_{i-1})$, so $\lambda(\bigcap_{i=0}^\infty C_i) = 0$.
 But these $C_i$'s are the sets of real numbers that do not contain the block $\overline{B} = (c_m-1, c_1-1, \cdots, c_{m-1}-1)$ at position $mi$. 
Thus $\mathscr{C} \subset \bigcap_{i=0}^\infty C_i$, so $\lambda(\mathscr{C}) = 0$.

Let $E \subseteq [0,1]$ be an interval. Since $x$ is $Q$-normal, any block $B \in \mathcal{B}$ must occur infinitely often,  so $T_{Q,n}(x) \notin \mathscr{C}$ for all $n$.
 So for all $n$, we have that $A_n(E,(T_{Q,mn}(x))) = A_n(E-\mathscr{C}, (T_{Q,mn}(x)))$.
 Furthermore, since $\mathscr{C}$ is a null set, for each $\epsilon>0$ we can find finitely many blocks $(A_i)_{i=1}^k, (B_j)_{j=1}^\ell\in \mathcal{B}$ such that 
\begin{align*}
&\bigcup_{i=1}^k J_0(A_i) \subset E-\mathscr{C} \subset \bigcup_{j=1}^\ell J_0(B_j) \hbox{ and }\\
&\lambda(E-\mathscr{C})+\epsilon> \lmeas{\bigcup_{j=1}^\ell B_j} \geq \lmeas{\bigcup_{i=1}^k J_0(A_i)} > \lambda(E-\mathscr{C}) - \epsilon.
\end{align*}
But then 
\begin{align*}
\lim_{n \to \infty} \frac{A_n(E-\mathscr{C}, (b^i x))}{n} 
\geq \lim_{n \to \infty} \frac{A_n\left(\bigcup_{i=1}^k J_0(A_i), (b^i x)\right)}{n} \\
= \sum_{n=1}^k \lambda(J_0(A_i)) 
> \lambda(E-\mathscr{C})-\epsilon.
\end{align*}
Similarly, we have that 
$$
\lim_{n \to \infty} \frac{A_n(E-\mathscr{C}, (b^i x))}{n} < \lambda(E-\mathscr{C})+\epsilon.
$$ 
Since $\epsilon$ was arbitrary, 
$$
\lim_{n \to \infty} \frac{A_n(E-\mathscr{C}, (b^i x))}{n} = \lambda(E-\mathscr{C}).
$$ 
Finally, since $(T_{Q,n}(x))$ never lies in $\mathscr{C}$, we have that 
$$
\lim_{n \to \infty} \frac{A_n(E, (b^i x))}{n} = \lambda(E-\mathscr{C}) = \lambda(E),
$$ 
so $(b^i x)$ is u.d. mod 1. Thus $x$ is normal in base $b$.
\end{itemize}

\begin{proof}[Proof of \reft{secondthrm}]
Set $Q = (g^2,g^2,g^2, \cdots)$ and let $x = .d_1 d_2 d_3 \cdots \wrt{Q}$ be a real number. If $x$ is not $Q$-normal (and therefore not $Q$-distribution normal), then we are done. If $x$ is $Q$-normal (and therefore $Q$-distribution normal), then construct a new basic sequence $P$ as follows\footnote{For example, if $g=2$ and $x = 0.132113\cdots \wrt{Q}$, then $P = (4,2,2,4,4,4,2,2, \cdots)$.}. If $d_n = g^2-1$, put $p_{n+N_n^Q(g^2-1,x)} = g$ and $p_{n+N_n^Q(g^2-1, x)+1} = g$. If $d_n \neq g^2-1$, put $p_{n+N_n^Q(g^2-1,x)} = g^2$.

Note that if $x = .e_1 e_2 e_3 \cdots \wrt{P}$, then $e_i \neq g^2-1$ for all $i$. Since $x$ is $Q$-normal (and therefore $Q$-distribution normal) $x$ is not $P$-normal as $p_n = g^2$ infinitely often and $P$ is bounded. Consider the interval $[0, 2/g)$. Note that $T_{P,i}(x) \in E$ if and only if $e_i <2$ when $p_i = g$ or $e_i < 2g$ when $p_i = g^2$. Thus 
$$
A_n(E, (T_{P,i}(x))) = \sum_{j = 0}^{2g-1} N_n^Q(j, x) + 2 N_n^Q(g^2-1, x).
$$
Since $x$ is $Q$-normal we have
$$
\lim_{n \to \infty} \frac{A_n(E,(T_{P,i}(x)))}{n} = (2g-1) \frac{1}{g^2} + \frac{2}{g^2} = \frac{2g+1}{g^2} \neq \frac{2}{g}.
$$
Thus $x$ is not $P$-distribution normal.
\end{proof}

It should be noted that \reft{secondthrm} follows immediately from Theorem 1.2.5 in P. Lafer's dissertation \cite{Lafer} but we prefer to give a constructive proof.
\section{Further questions}

\begin{problem}
For which pairs of basic sequences $(P,Q)$ does $\NQ = \N{P}$ or $\DNQ = \DN{P}$?
\end{problem}

\begin{problem}
For which pairs $(Q,b)$ of a basic sequence and an integer greater than or equal to 2 does $\NQ = \N{b}$ or $\DNQ = \N{b}$?
\end{problem}

\begin{problem}
For which basic sequences $Q$ does $\N{Q} = \DN{Q}$?
\end{problem}

\begin{problem}
For which basic sequences $Q$ does there is exist a number-theoretic transformation $T$ with $\N{T} = \DN{Q}$?
\end{problem}

\begin{problem}
Is there a basic sequence $Q$ such that $\DN{Q}$ is the set of real numbers normal with respect to the regular continued fraction expansion?
\end{problem}

\bibliographystyle{amsplain}

\providecommand{\bysame}{\leavevmode\hbox to3em{\hrulefill}\thinspace}
\providecommand{\MR}{\relax\ifhmode\unskip\space\fi MR }
\providecommand{\MRhref}[2]{%
  \href{http://www.ams.org/mathscinet-getitem?mr=#1}{#2}
}
\providecommand{\href}[2]{#2}

\end{document}